\newtheorem{theorem}{Theorem}[section]
\newtheorem{lemma}[theorem]{Lemma}
\theoremstyle{definition}
\newtheorem{corollary}[theorem]{Corollary}
\newtheorem{estimate}[theorem]{Estimate}
\newtheorem{proposition}[theorem]{Proposition}
\theoremstyle{remark}
\newtheorem{remark}[theorem]{Remark}
\numberwithin{equation}{section}
\newcommand\F{{\mathbb F}}
\newcommand\Q{{\mathbb Q}}
\newcommand\Z{{\mathbb Z}}
\newcommand\NN{{\mathcal{N}}}
\begin{document}

\title[Pairing-friendly elliptic curves]{Heuristics of the Cocks-Pinch method}

\author{Min Sha}
\address{Institut de Mathematiques de Bordeaux, Universite Bordeaux 1, 33405 Talence Cedex, France}
\email{shamin2010@gmail.com}
\thanks{The author is supported by the China Scholarship Council.}


\subjclass[2010]{Primary 14H52, 11T71, 11G20.}



\keywords{Pairing-friendly elliptic curve, Cocks-Pinch method, Bateman-Horn conjecture, pairing-friendly field}

\begin{abstract}
We heuristically analyze the Cocks-Pinch method by using the Bateman-Horn conjecture. Especially, we present the first known heuristic which suggests that any efficient construction of pairing-friendly elliptic curves can efficiently  generate such curves over pairing-friendly fields, naturally including the Cocks-Pinch method. Finally, some numerical evidence is given.
\end{abstract}

\maketitle



\section{Introduction}

\subsection{Motivation}
Mainly inspired by the following pioneering works: three-party one-round key agreement \cite{Joux2000}, identity-based encryption
\cite{Boneh2003,Sakai2000}, short signature scheme \cite{Boneh2004}, easing the cryptographic applications of pairings \cite{Verheul2001} and
efficient computation of pairings associated to elliptic curves \cite{Miller2004}, there has been a flurry of activity in the design and analysis
of cryptographic protocols by using pairings on elliptic curves. More in-depth studies of pairing-based cryptography
can be found in the expository articles \cite{Galbraith2005,Paterson2005}, and in the extensive research literature.

The elliptic curves suitable for implementing pairing-based systems should have a small embedding degree with respect to a large prime-order subgroup. We call them \emph{pairing-friendly elliptic curves}. More precisely, a pairing-friendly elliptic curve over a finite field $\F_q$
contains a subgroup of large prime order $r$ such that for some $k$, $r|q^{k}-1$ and $r\nmid q^{i}-1$ for $0<i<k$, and
the parameters $r,q$ and $k$ should satisfy the following conditions:
\begin{itemize}
\item $r$ should be large enough so that the Discrete Logarithm Problem (DLP) in an order-$r$ subgroup of $E(\F_q)$ is infeasible.

\item $k$ should be sufficiently large so that DLP in $\F_{q^{k}}^{*}$ is intractable.

\item $k$ should be small enough so that arithmetic in $\F_{q^{k}}$ is feasible.
\end{itemize}
Here, $k$ is called the \emph{embedding degree} of $E$ with respect to $r$, and the ratio $\frac{\log q}{\log r}$ is called the
\emph{rho-value} of $E$ with respect to $r$. There is a specific definition for pairing-friendly elliptic curve in \cite[Definition 2.3]{Freeman2010}, that is, it should meet $r\ge\sqrt{q}$ and $k\le \log_{2}(r)/8$, where $\log_2$ is the binary logarithm.

These conditions make pairing-friendly curves rare, and they can not be constructed by random generation. This naturally produces two important problems:
\begin{itemize}
\item Finding efficient constructions of pairing-friendly curves.

\item Analyzing these constructions, including the frequency of curves constructed, their efficiency, security level, etc.
\end{itemize}

The earliest constructions of pairing-friendly curves involved supersingular curves. However, on the one hand due to \emph{MOV attack}
\cite{Menezes1993}, Frey-R{\"u}ck reduction \cite{Frey1994} and most recently \cite{Hayashi}, supersingular curves are widely believed to have some cryptographic weaknesses;
on the other hand, for supersingular curves the embedding degree $k$ has only 5 choices, i.e. $k\in\{1,2,3,4,6\}$.
Thus, it seems quite important to construct ordinary curves with the above properties.

After consecutive efforts of many researchers, many methods for constructing ordinary curves have been found.
 An exhaustive survey can be found in \cite{Freeman2010}, where the authors gave a coherent framework of all existing constructions. Unfortunately, none of these constructions has been rigorously analyzed. Even heuristic analysis is far from sufficiency except for
the so-called \emph{MNT curves} \cite{Miyaji2001}. For the heuristic analysis of MNT curves, see \cite{Shparlinski2006,Shparlinski2012}.
Most recently, a heuristic asymptotic formula for the number of
isogeny classes of pairing-friendly curves over prime fields was presented in \cite{Boxall2012}, and some heuristic arguments about \emph{Barreto-Naehrig family} \cite{Barreto2006} were also given therein.

It is widely accepted that the \emph{Cocks-Pinch method} \cite{Cocks-Pinch} is one of the most flexible algorithms for constructing pairing-friendly curves, such as with many curves possible, with arbitrary embedding degree, with prime-order subgroups of nearly arbitrary size, and so on.
We recall it in Section \ref{Cocks-Pinch}.

In addition, pairing-friendly fields were introduced by Koblitz and Menezes \cite{Koblitz2005} as an
efficient way to implement cryptographic bilinear pairings. They define a field $\F_{p^k}$ as being pairing-friendly if the prime characteristic $p\equiv 1$ (mod 12) and the embedding degree $k=2^{i}3^{j}, i>0$. If $j=0$, it only needs $p\equiv 1$ (mod 4). Definitely pairing-friendly curves over pairing-friendly fields are attractive.

\subsection{Main results}

In this paper, firstly we give two different kinds of heuristics to justify the same asymptotic formula about the Cocks-Pinch method. This confirms the general consensus that most
curves constructed by this method have rho-value around 2. One is borrowed directly from \cite{Boxall2012}, the other is based on the Bateman-Horn conjecture. Finally, we will see that the formula is compatible with numerical data. The reason we present the latter one is that in many cases the Bateman-Horn conjecture is indispensable for such heuristics, for example see \cite{Shparlinski2012}. Through the comparison here, we can say that such heuristics based on the Bateman-Horn conjecture are likely to be reasonable.

Secondly, we present the first known heuristics about pairing-friendly curves over pairing-friendly fields. The heuristics suggest that any efficient construction of pairing-friendly curves is also an efficient construction of such curves over pairing-friendly fields, naturally including the Cocks-Pinch method. Especially, the heuristics will be confirmed by the numerical data from the Cocks-Pinch method.

\subsection{Preliminary and Notation}
Let $\Phi_{k}$ be the $k$-th cyclotomic polynomial. The existing constructions of ordinary curves with small embedding degree typically work in the following two steps.
\begin{enumerate}
\item Find an odd prime $r$, integers $k\ge 2$ and $t$, and a prime power $q$ such that
\begin{equation}\label{condition1}
|t|\le 2\sqrt{q},\quad \gcd(q,t)=1, \quad r|q+1-t,\quad r|\Phi_{k}(q).
\end{equation}

\item Construct an elliptic curve $E$ over $\F_q$ with $|E(\F_q)|=q+1-t$.
\end{enumerate}

Since $r|\Phi_{k}(q)$, $k$ is the multiplicative order of $q$ modulo $r$ and then $k|r-1$.
In order to satisfy practical requirements, $k$ should be reasonably small, while the rho-value should be as small as possible, preferably close to 1.

Unfortunately, the second step above is feasible only if $t^{2}-4q$ has a very small square-free part; that is, if the so-called
\emph{CM equation}
\begin{equation}\label{condition2}
4q=t^{2}+Du^{2}
\end{equation}
with some integers $u$ and $D$, where $D$ is a small square-free positive integer. In this case, for example when $D\le 10 ^{13}$ (see \cite{Sutherland2011}), $E$ can be efficiently constructed via the \emph{CM method} (see \cite[Section 18.1]{Avanzi2005}). Here, $D$ is called the \emph{CM discriminant} of $E$.

For the imaginary quadratic field $\Q(\sqrt{-D})$, let $h_{D}$ be the class number of $\Q(\sqrt{-D})$ and $w_{D}$ the number of roots of unity in $\Q(\sqrt{-D})$. We denote its discriminant by $D^{*}$. Then put $e(k,D)=2$ if $D^{*}|k$ (namely $\Q(\sqrt{-D})\subseteq \Q(\zeta_k)$), otherwise put $e(k,D)=1$.

Recall that a well-known kind of constructions of pairing-friendly curves with $k$ and $D$ fixed is called the \emph{complete polynomial family}.
Briefly speaking, the idea is to parameterize $r,t,  q,u$ as polynomials and then choose $r(x),t(x),  q(x),u(x)$ satisfying Conditions (\ref{condition1}) and (\ref{condition2}) for any $x$. Here we define the ratio $\frac{\deg q(x)}{\deg r(x)}$ as the \emph{rho-value} of the family. See \cite[Section 2.1]{Freeman2010} for more details.

Throughout the paper, we use the Landau symbols $O$ and $o$ and the Vinogradov symbol $\ll$. We recall that the assertions $U=O(V)$ and $U\ll V$ are both equivalent to the inequality $|U|\le cV$ with some constant $c$, while $U=o(V)$ means that $U/V\to 0$.

In this paper, we also use the asymptotic notation $\sim$. Let $f$ and $g$ be two real functions with respect to $x$, both of them are strictly positive for sufficiently large $x$. We say that $f$ is asymptotically equivalent to $g$ as $x\to \infty$ if $f(x)/g(x)\to 1$ when $x\to \infty$, denoted by $f(x)\sim g(x)$.

\section{Heuristics of the Cocks-Pinch method}\label{Cocks-Pinch}

\subsection{Background on the Cocks-Pinch method}
In an unpublished manuscript \cite{Cocks-Pinch}, Cocks and Pinch proposed an algorithm for constructing pairing-friendly curves with arbitrary embedding degree. More precisely, see \cite[Theorem 4.1]{Freeman2010} or \cite[Algorithm IX.4]{Galbraith2005}, fix an embedding degree $k$ and a
CM discriminant $D$, then execute the following steps:

\begin{enumerate}[Step 1.]
\item Choose a prime $r$ such that $k|r-1$ and $-D$ is square modulo $r$.\label{step1}

\item Choose an integer $g$ which is a primitive $k$-th root of unity in $(\Z/r\Z)^{*}$. \label{step2}

\item Put $t^{\prime}=g+1$ and choose an integer $u^{\prime}\equiv (t^{\prime}-2)/\sqrt{-D}$ (mod $r$).\label{step3}

\item Let $t\in \Z$ be congruent to $t^{\prime}$ modulo $r$, and let $u\in \Z$ be congruent to $u^{\prime}$ modulo $r$. Put $q=(t^{2}+Du^{2})/4$.\label{step4}

\item If $q$ is an integer and prime, then there exists an elliptic curve $E$ over $\F_q$ with an order-$r$ subgroup and embedding degree $k$.
If $D$ is not to large, then $E$ can be efficiently constructed via the CM method.\label{step5}

\end{enumerate}

First, we notice that every triple $(r,t,q)$ satisfying Conditions (\ref{condition1}) and (\ref{condition2}) with $q$ prime can be generated by the
Cocks-Pinch method. 

Given a real number $\rho>0$, let $F_{k,D,\rho}(x)$ be the number of triples $(r,t,q)$ constructed by the Cocks-Pinch method with fixed $k$ and $D$ such that $q$ is an odd prime, $r\le x$ and $q\le r^{\rho}$. The previous paragraph implies that there is a natural one to one correspondence between the triples $(r,t,q)$ here and the triples in \cite[Estimate 1]{Boxall2012}. The reason we use the parameter $q$ in the triples here is that we want to underline its importance.

In the sequel, first we will extend \cite[Estimate 1]{Boxall2012} to all $\rho>1$ for $F_{k,D,\rho}(x)$, for the sake of completeness.
Then we will give another approach to this heuristic formula by applying the Bateman-Horn conjecture. In Section \ref{data}, we will see that this formula is compatible with numerical data.

\subsection{Heuristics from algebraic number theory}

As the above discussions, Boxall \cite[Estimate 1]{Boxall2012} actually got a heuristic asymptotic formula for $F_{k,D,\rho}(x)$ when $1<\rho<2$.

\begin{estimate}[\cite{Boxall2012}]\label{F_{k,D}}
Given an integer $k\ge 3$, a positive square-free integer $D$ and a real $\rho>1$.
 Suppose that
 \begin{enumerate}
\item $(k,D)\ne (3,3),(4,1)$ and $(6,3)$; \label{assump1}
\item If there exists a complete polynomial family $(r(x), t(x), q(x))$ of pairing-friendly curves with rho-value 1, embedding degree $k$ and CM discriminant $D$, then $\rho>1+\frac{1}{\deg r(x)}$. \label{assump2}
\end{enumerate}
Then we have the following heuristic asymptotic formula
\begin{equation}\label{main}
F_{k,D,\rho}(x)\sim \frac{e(k,D)w_{D}}{2\rho h_{D}}\int_{5}^{x}\frac{dz}{z^{2-\rho}(\log z)^2}.
\end{equation}
\end{estimate}
\begin{proof}
For the heuristic arguments of \cite[Estimate 1]{Boxall2012}, the condition $1<\rho<2$ is only used in \cite[Page 87, Step 3]{Boxall2012}.
Notice that the number of prime ideals of $\Q(\sqrt{-D})$ with norm bounded by $x$ is asymptotically equivalent to $\frac{x}{\log x}$ as $x\to \infty$, but the number of prime ideals with norm bounded by $x$ and underlying prime number inert is $O(\frac{\sqrt{x}}{\log \sqrt{x}})$. So, as $x\to\infty$, we can get the same heuristic formula when $\rho\ge 2$.

Since $k|r-1$ and $k\ge 3$, we have $r\ge k+1\ge 4$. So $r\ge 5$. Then we choose the integral interval $[5,x]$.

\end{proof}

As explained in \cite{Boxall2012}, without the two assumptions in Estimate \ref{F_{k,D}}, the asymptotic formula may not hold any more. In particular, if there exists a complete polynomial family  with rho-value 1, embedding degree $k$ and CM discriminant $D$, then this family can generate more triples than predicted by (\ref{main}). For example, the Barreto-Naehrig family is currently the only known complete polynomial family with rho-value 1, for this family $k=12$, $D=3$ and $\deg r(x)=4$, see Table \ref{ideal} for numerical data.

Now we want to say more about the parameters in (\ref{main}). It is well-known that $w_D$ is given by the following formula:
\begin{equation}
w_{D}=\left\{ \begin{array}{ll}
                 4 & \textrm{if $D=1$},\\
                 6 & \textrm{if $D=3$},\\
                 2 & \textrm{if $D=2$ or $D>3$}.
                 \end{array} \right.
\notag
\end{equation}
Furthermore, by the well-known Dirichlet's class number formula of imaginary quadratic fields (for example see \cite[Exercise 10.5.12]{Murty2004}), we know
\begin{equation}\label{class number}
h_{D}=\left\{ \begin{array}{ll}
                 \sqrt{D}w_{D}L_{D}/\pi & \textrm{if $D\equiv 1,2$ (mod 4)},\\
                 \sqrt{D}w_{D}L_{D}/(2\pi) & \textrm{if $D\equiv 3$ (mod 4)},
                 \end{array} \right.
\end{equation}
where $L_{D}=\sum\limits_{n=1}^{\infty}\left(\frac{D^{*}}{n}\right)/n=\prod\limits_{\textrm{prime $p$}}\left(1-\left(\frac{D^{*}}{p}\right)/p\right)^{-1}$, $D^{*}$ is the discriminant of $\Q(\sqrt{-D})$ and $(\frac{\cdot}{\cdot})$ is the Kronecker  symbol.

Based on the following lemma, we can get another version of the above proposition, that is,
\begin{equation}\label{F_k}
F_{k,D,\rho}(x)\sim \frac{e(k,D)w_{D}}{2\rho(\rho-1) h_{D}}\frac{x^{\rho-1}}{(\log x)^2},
\end{equation}
 see also \cite[Formula (0.1)]{Boxall2012}. We are sure that the lemma is well-known. It is more convenient to give a simple proof rather than find some references. We will use it later.
\begin{lemma}\label{integral}
For any real numbers $a,m, s$ with $a>1$ and $s<1$, we have
$$
\int_{a}^{x}\frac{dz}{z^s(\log z)^{m}}\sim \frac{x^{1-s}}{(1-s)(\log x)^{m}}.
$$
\end{lemma}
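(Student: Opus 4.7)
The plan is to prove the asymptotic equivalence by L'Hôpital's rule, applied to the ratio $F(x)/G(x)$, where
$$F(x)=\int_a^x \frac{dz}{z^s(\log z)^m}, \qquad G(x)=\frac{x^{1-s}}{(1-s)(\log x)^m}.$$
First I would check the hypotheses of the $\infty/\infty$ form of L'Hôpital. Since $s<1$, the integrand is eventually monotone and comparable to $z^{-s}/(\log z)^m$; using $\log z \ll z^{\varepsilon}$ for any small $\varepsilon>0$ one sees that the integrand is bounded below by $z^{-s-\varepsilon}$ for large $z$, which is non-integrable on $[a,\infty)$ because $s+\varepsilon<1$. Hence $F(x)\to\infty$. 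Likewise $x^{1-s}$ dominates any power of $\log x$, so $G(x)\to\infty$.

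Next I would differentiate. By the fundamental theorem of calculus,
$$F'(x)=\frac{1}{x^s(\log x)^m},$$
while a direct computation (product rule) gives
$$G'(x)=\frac{x^{-s}}{(\log x)^m}\left(1-\frac{m}{(1-s)\log x}\right).$$
Therefore
$$\frac{F'(x)}{G'(x)}=\left(1-\frac{m}{(1-s)\log x}\right)^{-1}\longrightarrow 1 \quad\text{as } x\to\infty.$$
Applying L'Hôpital's rule in its $\infty/\infty$ form then yields $F(x)/G(x)\to 1$, which is exactly the asserted asymptotic.

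There is really no deep obstacle here; the only points that need care are (i) justifying the divergence $F(x)\to\infty$, so that the $\infty/\infty$ version of L'Hôpital applies, and (ii) noting that the argument does not require $m>0$—for any real $m$ the computation of $G'(x)$ and the limit of $F'(x)/G'(x)$ are identical. An alternative route, which I would mention in passing, is integration by parts: writing
$$\int_a^x \frac{dz}{z^s(\log z)^m}=\left[\frac{z^{1-s}}{(1-s)(\log z)^m}\right]_a^x+\frac{m}{1-s}\int_a^x\frac{dz}{z^s(\log z)^{m+1}},$$
the boundary term gives the claimed main term and the remaining integral is easily shown to be of smaller order than $x^{1-s}/(\log x)^m$, yielding the same conclusion.
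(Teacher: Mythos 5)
Your proof is correct, but it takes a genuinely different route from the paper. The paper integrates by parts twice, obtaining the main term plus a remainder $\frac{m}{1-s}\int_a^x\frac{dz}{z^s(\log z)^{m+1}}$, and then controls that remainder by a bootstrap: splitting the second-level integral at a large threshold $A$ with $\log A>\frac{m+1}{1-s}$ to absorb it back into the first-level one, concluding that the remainder is $O\bigl(x^{1-s}/(\log x)^{m+1}\bigr)$ and hence of lower order. You instead apply L'H\^opital's rule in the $\infty/\infty$ form to $F(x)/G(x)$, and your verification of the hypotheses is sound: $F(x)\to\infty$ because the integrand dominates $z^{-s-\varepsilon}$ with $s+\varepsilon<1$ (and trivially dominates $z^{-s}$ when $m\le 0$), $G(x)\to\infty$ since $1-s>0$, your formula for $G'(x)$ is correct, $G'$ is eventually nonzero, and $F'/G'=\bigl(1-\frac{m}{(1-s)\log x}\bigr)^{-1}\to 1$. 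Your argument is shorter and avoids the threshold-splitting step entirely; the paper's approach has the modest advantage of exhibiting the next-order term explicitly (the remainder integral is itself asymptotically $\frac{m}{(1-s)^2}\cdot\frac{x^{1-s}}{(\log x)^{m+1}}$), which could matter if one wanted a secondary term, but for the stated asymptotic both are equally adequate. Your closing remark correctly identifies the paper's alternative route via integration by parts.
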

\begin{proof}
Integrating by parts, we obtain
$$
\int_{a}^{x}\frac{dz}{z^s(\log z)^{m}}=\frac{z^{1-s}}{(1-s)(\log z)^{m}}\Big\vert_{a}^{x}+\frac{m}{1-s}\int_{a}^{x}\frac{dz}{z^s(\log z)^{m+1}},
$$
and
$$
\int_{a}^{x}\frac{dz}{z^s(\log z)^{m+1}}=\frac{z^{1-s}}{(1-s)(\log z)^{m+1}}\Big\vert_{a}^{x}+\frac{m+1}{1-s}\int_{a}^{x}\frac{dz}{z^s(\log z)^{m+2}}.
$$
We choose a positive real number $A$ such that $A>a$ and $\log A>\frac{m+1}{1-s}$. Notice that for $x>A$, we have
$$
\int_{a}^{x}\frac{dz}{z^s(\log z)^{m+2}}\le \int_{a}^{A}\frac{dz}{z^s(\log z)^{m+2}}+\frac{1}{\log A}\int_{A}^{x}\frac{dz}{z^s(\log z)^{m+1}}.
$$
Then we get
$$
\int_{a}^{x}\frac{dz}{z^s(\log z)^{m+1}}\ll \frac{x^{1-s}}{(1-s)(\log x)^{m+1}}.
$$
Finally, we have
$$
\int_{a}^{x}\frac{dz}{z^s(\log z)^{m}}\sim \frac{x^{1-s}}{(1-s)(\log x)^{m}}.
$$
\end{proof}

It is widely accepted that the rho-value of curves produced by the Cocks-Pinch method tends to be around $2$.
From (\ref{F_k}) we can easily see that when $\rho$ is close to $1$, the curves with relevant rho-value
are rare among the whole family constructed by the Cocks-Pinch method.

\subsection{Heuristics from the Bateman-Horn conjecture}

The Bateman-Horn conjecture has been used to analyze some constructions of pairing-friendly elliptic curves, see \cite{Boxall2012,Shparlinski2012}.
In this subsection, applying the Bateman-Horn conjecture we will give another approach to justify the heuristic asymptotic formula of $F_{k,D,\rho}(x)$ in Estimate \ref{F_{k,D}}.

The Bateman-Horn conjecture provides a conjectured density for the positive integers at which a given system of polynomials all have prime values,
see \cite{Bateman1962}. We recall it here for the conveniences of readers.

Given any finite set $\mathcal{F}=\{f_1,f_2,\cdots,f_m\}$ consisting of irreducible polynomials $f_1(T),\cdots,f_m(T)\in\Z[T]$ with positive leading coefficients and such that there is no prime $p$ with $p|f_1(n)\cdots f_m(n)$ for every integer $n\ge 1$, the Bateman-Horn conjecture says
\begin{equation}
|\{1\le n\le X: f_1(n),\cdots, f_m(n)\textrm{ are all prime}\}|\sim \frac{C(\mathcal{F})}{\deg f_1\cdots\deg f_m}\int_{2}^{X}\frac{dz}{(\log z)^{m}},
\end{equation}
where $C(\mathcal{F})$ is given by the conditionally convergent infinite product
$$
C(\mathcal{F})=\prod\limits_{\textrm{$p$ prime}}\frac{1-\omega_{p}(\mathcal{F})/p}{(1-1/p)^{m}},
$$
and
$$
\omega_{p}(\mathcal{F})=|\{1\le n\le p: f_1(n)\cdots f_m(n)\equiv \textrm{0 (mod $p$)}\}|.
$$

Based on Lemma \ref{integral}, we can get another version of the Bateman-Horn conjecture, that is,
\begin{equation}
|\{1\le n\le X: f_1(n),\cdots, f_m(n)\textrm{ are all prime}\}|\sim \frac{C(\mathcal{F})}{\deg f_1\cdots\deg f_m}\frac{X}{(\log X)^{m}},
\end{equation}
which we will use in the sequel.

Notice that the ring of integer of $\Q(\sqrt{-D})$ is $\Z\oplus\Z\frac{1+\sqrt{-D}}{2}$ if $D\equiv$ 3 (mod 4), and it is
$\Z\oplus\Z\sqrt{-D}$ if $D\equiv$ 1 or 2 (mod 4). Since the element $\alpha=\frac{t+u\sqrt{-D}}{2}$ is an algebraic integer of $\Q(\sqrt{-D})$,
$t$ and $u$ must have the same parity if $D\equiv$ 3 (mod 4), and otherwise both of them must be even.

\begin{estimate}\label{F2}
For any integer $k\ge 3$, and positive square-free integer $D\equiv 1,2$ {\rm (mod 4)}, under the same assumptions as Estimate \ref{F_{k,D}},   we heuristically have
\begin{equation}
F_{k,D,\rho}(x)\sim  \frac{e(k,D)w_D}{2\rho h_{D}}\int_{5}^{x}\frac{dz}{z^{2-\rho}(\log z)^2}.
\end{equation}
\end{estimate}
\begin{proof}
We investigate the first four steps of the Cocks-Pinch method one by one.

Let $r\ge 2$ be any integer. The probability that $r$ is prime is $1/\log r$,
here we use the regular heuristic that the probability of a random integer $n$ to be prime is $1/\log n$. Since $k$ has finitely many prime factors,
for an arbitrary prime $r$, the probability that $r\nmid k$ is 1.
 Notice that there are $\varphi(k)$ residue classes modulo $k$ which consist of integers prime to $k$, the probability that $r$ is prime and $k |r-1$ is $\frac{1}{\varphi(k)\log r}$.

Since $k|r-1$, $r$ splits completely  over $\Q(\zeta_k)$.    Therefore, if $\Q(\sqrt{-D})\subseteq \Q(\zeta_k)$, i.e. the discriminant of $\Q(\sqrt{-D})$ divides $k$,  then $r$ splits completely over $\Q(\sqrt{-D})$. Thus, $-D$ is square modulo $r$. Otherwise, if $\Q(\sqrt{-D})\not\subseteq \Q(\zeta_k)$, the probability that $-D$ is square modulo $r$ is $1/2$. So the probability that $-D$ is square modulo $r$ is $e(k,D)/2$.

When $r$ is fixed, the number of choices of $g$ is $\varphi(k)$. After fixing $g$, $t^{\prime}$ is fixed and $u^{\prime}$ has two choices.

Thus, for an arbitrary integer $r\ge 2$, the probability that $r$ satisfies Steps \ref{step1}, \ref{step2} and \ref{step3} is $e(k,D)/\log r$. Moreover, it also needs that $r\ge 5$. In the sequel, we investigate Step \ref{step4}.

Since $D\equiv 1,2$ (mod 4), $t$ and $u$ must be even. So it is equivalent to count the number of integer pairs $(t,u)$ such that $q=t^2+Du^2$ is prime
with $q\le r^{\rho}$. Then for the integers $t$ and $u$, we have $|t|\le \sqrt{r^{\rho}}$ and $|u|\le \sqrt{r^{\rho}/D}$. Notice that the ratio between the area of the ellipse $\Lambda: t^2+Du^2=r^{\rho}$ and that of the rectangle $\Omega=\{(t,u): |t|\le \sqrt{r^{\rho}}, |u|\le \sqrt{r^{\rho}/D}\}$ is $\pi/4$. Now we assume that the ratio of the number of integer pairs $(t,u)$ in $\Lambda$ and that in $\Omega$ is $\pi/4$. Subsequently, we first count the number of $(t,q)$ with $q=t^2+Du^2$ prime, $t\le \sqrt{r^{\rho}}$ and $u\le \sqrt{r^{\rho}/D}$, and then to get the final result we need to multiply this amount by $\pi/4$ .

For every integer $u\ge 1$, let $f_u(T)=T^2+Du^2\in\Z[T]$. For $\mathcal{F}=\{f_u\}$, it satisfies the required conditions. By the Bateman-Horn conjecture, we have
\begin{equation}
|\{1\le t\le \sqrt{r^{\rho}}: f_u(t)\textrm{ is prime}\}|\sim \frac{C(f_u)\sqrt{r^{\rho}}}{\rho\log r},
\notag
\end{equation}
where
$$
C(f_u)=\prod\limits_{\textrm{$p$ prime}}\frac{1-\omega_{p}(f_u)/p}{1-1/p},
$$
and
$$
\omega_{p}(f_u)=|\{1\le n\le p: n^{2}\equiv -Du^{2} \textrm{ (mod $p$)}\}|.
$$
It is easy to see that
\begin{equation}
\omega_{p}(f_u)=\left\{ \begin{array}{ll}
                 1 & \textrm{if $p=2$ or $p|u$},\\
                 \left(\frac{-D}{p}\right)+1 & \textrm{if $p\ge 3$ and $p\nmid u$}.
                 \end{array} \right.
\notag
\end{equation}
Put
$$
g(u)=\prod\limits_{\textrm{$p\ge 3, p|u$}}\frac{p-1}{p-1-\left(\frac{-D}{p}\right)}.
$$
We also set $g(2^{n})=1$ for any integer $n\ge 0$. This makes $g(u)$ a multiplicative function. Notice that
$$
C(f_1)=C(f_2)=\prod\limits_{\textrm{$p\ge 3$}}\frac{p-1-\left(\frac{-D}{p}\right)}{p-1}.
$$
Obviously, $C(f_u)=C(f_1)\cdot g(u)$. Then we have
$$
\sum\limits_{1\le u\le \sqrt{r^{\rho}/D}}\frac{C(f_u)\sqrt{r^{\rho}}}{\rho\log r}=\frac{C(f_1)\sqrt{r^{\rho}}}{\rho\log r}\sum\limits_{1\le u\le \sqrt{r^{\rho}/D}}g(u).
$$

Here we need an asymptotic formula for
$$
S(X)=\sum\limits_{1\le u\le X}g(u).
$$
Notice that $g(u)$ is a multiplicative function and $1-1/p\le g(p)\le 1+\frac{3}{p}$ for any prime $p$. Recall the Mertens' second theorem
\begin{equation}
\sum\limits_{\textrm{$p\le X$}}\frac{1}{p}=\log\log X+B_1+o(1),
\notag
\end{equation}
where $B_1$ is an absolute constant (see \cite[Theorem 427]{Hardy}).
Then, we get
$$
\sum\limits_{\textrm{$p\le X$}}g(p)=\pi(X)+O(\log\log X),
$$
where $\pi(X)$ is the number of primes less than or equal to $X$. Then by \cite[Proposition 4]{Finch2010} , we have
$$
S(X)=(C_{g}+o(1))X,
$$
where
$$C_g=\prod\limits_{\textrm{$p$}}(1+\frac{g(p)}{p}+\frac{g(p^{2})}{p^{2}}+\cdots)(1-\frac{1}{p}).$$

Notice that $g(p^{n})=g(p)$ for any prime $p$ and any $n\ge 1$. Then, we have
$$
C_{g}=\prod\limits_{\textrm{$p\ge 3$}}\frac{p-1}{p}\left(1+\frac{1}{p-1-\left(\frac{-D}{p}\right)}\right).
$$
Thus,
$$
C(f_1)C_{g}=\prod\limits_{\textrm{$p\ge 3$}}\left(1-\left(\frac{-D}{p}\right)/p\right)=L_{D}^{-1},
$$
where $L_D$ has been defined in (\ref{class number}).
Hence,
\begin{align*}
\sum\limits_{1\le u\le \sqrt{r^{\rho}/D}}\frac{C(f_u)\sqrt{r^{\rho}}}{\rho\log r}&=(L_{D}^{-1}+o(1))\frac{r^{\rho}}{\sqrt{D}\rho\log r}\\
&\sim \frac{r^{\rho}}{\rho L_{D}\sqrt{D}\log r}=\frac{w_{D}r^{\rho}}{\pi\rho h_{D}\log r}.
\end{align*}

Note that $t$ can be a negative integer. We also note that if $t^{\prime}$ and $u^{\prime}$ are fixed, then the residue classes modulo $r$ which $t$ and $u$ belong to are also fixed. So the expected number of pairs $(t,q)$ associated to a triple $(r,t^{\prime},u^{\prime})$ with $q\le r^{\rho}$  is asymptotically equivalent to
\begin{equation}
\frac{\pi}{4}\cdot\frac{w_{D}r^{\rho}}{\pi\rho h_{D}\log r}\cdot 2 \cdot\frac{1}{r^{2}}=\frac{w_D}{2\rho h_{D}r^{2-\rho}\log r},
\notag
\end{equation}
as $r\to \infty$.

Therefore, we have
\begin{align*}
F_{k,D,\rho}(x)&\sim \sum\limits_{5\le r\le x}\frac{e(k,D)}{\log r}\cdot \frac{w_D}{2\rho h_{D}r^{2-\rho}\log r}\\
&\sim \frac{e(k,D)w_D}{2\rho h_{D}}\int_{5}^{x}\frac{dz}{z^{2-\rho}(\log z)^2}.
\notag
\end{align*}
\end{proof}

For the Cocks-Pinch method, it is fortunate that we can apply two different kinds of heuristics. But in general, the Bateman-Horn conjecture is indispensable when investigating the constructions of pairing-friendly curves. Estimate \ref{F2} tells us that such investigations based on the Bateman-Horn conjecture are likely to be reasonable.

\subsection{Remark}
Boneh, Rubin and Silverberg \cite{Boneh2011} have found that the Cocks-Pinch method can be used to construct elliptic curves with embedding degree $k$ with respect to $r$, where $r$ is a large composite number. This kind of elliptic curves was first used by Boneh, Goh and Nissim \cite{Boneh2005} for partial homomorphic encryption, and now they have a number of other important applications in cryptography. The methods of this section could also be applied to obtain heuristic estimates in this context.

\section{Involving Pairing-friendly Fields}\label{field}
In this section, we want to heuristically count the number of triples $(r,t,q)$ constructed by the Cocks-Pinch method such that $q$ is a prime and $q\equiv 1$ (mod 4 or 12) .

Let $G_{k,D,\rho}(x)$ be the number of triples $(r,t,q)$ constructed by the Cocks-Pinch method with fixed $k$ and $D$ such that $q$ is an odd prime, $q\equiv 1$ (mod 4), $r\le x$ and $q\le r^{\rho}$. Let $H_{k,D,\rho}(x)$ be the number of such triples $(r,t,q)$ when we furthermore require that $q\equiv 1$ (mod 12).

From the CM equation: $q=\frac{t^2+Du^2}{4}$, it is easy to see that $q\equiv 1$ (mod 12) if and only if $q\equiv 1$ (mod 4) and $t^2+Du^2\equiv 1$ (mod 3).

First, we study the probability that $t^2+Du^2\equiv 1$ (mod 3).

\begin{proposition}\label{03}
If $3|D$, then we always have $t^2+Du^2\equiv 1$ {\rm (mod 3)}.
\end{proposition}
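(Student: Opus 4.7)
The plan is to reduce the CM equation modulo $3$ and exploit the fact that $q$ is a prime (and, in the Cocks-Pinch setting, much larger than $3$).

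First I would start from the CM equation (\ref{condition2}), namely $4q = t^{2}+Du^{2}$. Since $3\mid D$ by hypothesis, reducing modulo $3$ gives
$$
t^{2} \equiv 4q \equiv q \pmod{3}.
$$
Thus the question reduces to determining $q \bmod 3$, or equivalently $t \bmod 3$.

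Next I would observe that $q$ is, by construction of the Cocks-Pinch method, an odd prime, and (in the range of interest, where $r \ge 5$ and $q$ is produced as the output of Step \ref{step5}) we may assume $q \neq 3$. Therefore $q \not\equiv 0 \pmod{3}$, so $t^{2} \not\equiv 0 \pmod{3}$, which forces $3\nmid t$. Since the only nonzero square modulo $3$ is $1$, this gives $t^{2}\equiv 1 \pmod 3$.

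Finally, combining with $3\mid D$, we conclude
$$
t^{2}+Du^{2} \equiv 1 + 0 \equiv 1 \pmod{3},
$$
as required. There is no real obstacle here: the only point worth flagging is the harmless convention that $q\neq 3$, which is automatic in all meaningful applications of the Cocks-Pinch method (and implicit in the triples counted by $G_{k,D,\rho}(x)$ and $H_{k,D,\rho}(x)$).
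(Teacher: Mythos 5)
Your proof is correct and follows essentially the same route as the paper: reduce modulo $3$ and show $3\nmid t$ by ruling out $3\mid q$. The only difference is that you take $q\neq 3$ as a convention, whereas the paper derives it explicitly from the constraints $r\mid q+1-t$ and $r\ge 5$ (since $k\mid r-1$ and $k\ge 3$), so your flagged assumption is in fact justified by the setup rather than merely conventional.
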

\begin{proof}
Since $3|D$, $t^2+Du^2\equiv t^2 \equiv 1$ (mod 3) holds only if $3\nmid t$. Assume that $3|t$. Then we have $3|q$, thus $q=3$. Then $t=0,D=3$ and $u=\pm 2$. Since $r|q+1\pm t$ and $r\ge 5$, there is no possible $r$. So we must have $3\nmid t$, and thus we always have $t^2+Du^2\equiv 1$ (mod 3).
\end{proof}

\begin{corollary}\label{GH}
If $3|D$, then we always have $G_{k,D,\rho}(x)=H_{k,D,\rho}(x)$.
\end{corollary}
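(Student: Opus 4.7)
The plan is to observe that the inclusion $H_{k,D,\rho}(x)\le G_{k,D,\rho}(x)$ is immediate from the definitions, since requiring $q\equiv 1\pmod{12}$ is strictly stronger than requiring $q\equiv 1\pmod{4}$. Therefore the content of the corollary is the reverse inequality $G_{k,D,\rho}(x)\le H_{k,D,\rho}(x)$, and for this it suffices to show that every triple $(r,t,q)$ counted by $G_{k,D,\rho}(x)$ is automatically counted by $H_{k,D,\rho}(x)$, i.e.\ that its $q$ also satisfies $q\equiv 1\pmod 3$.

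First I would invoke the equivalence already noted just before Proposition \ref{03}: from the CM equation $4q=t^2+Du^2$, one has $q\equiv 1\pmod 3$ if and only if $t^2+Du^2\equiv 4\equiv 1\pmod 3$. Under the hypothesis $3\mid D$, Proposition \ref{03} guarantees unconditionally (for any Cocks--Pinch triple with $q$ prime and $r\ge 5$, which is built into the definition of $G_{k,D,\rho}(x)$) that $t^2+Du^2\equiv 1\pmod 3$. Combined with the standing assumption $q\equiv 1\pmod 4$ coming from membership in $G_{k,D,\rho}(x)$, the Chinese Remainder Theorem yields $q\equiv 1\pmod{12}$, which places the triple inside $H_{k,D,\rho}(x)$.

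There is no genuine obstacle here: the corollary is a direct bookkeeping consequence of Proposition \ref{03}, with the only point requiring care being to verify that the hypotheses of Proposition \ref{03} (in particular $r\ge 5$ and $q$ prime) are indeed part of the definition of $G_{k,D,\rho}(x)$, which they are.
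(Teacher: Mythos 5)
Your proof is correct and follows exactly the route the paper intends: the corollary is stated without proof as an immediate consequence of Proposition \ref{03} together with the equivalence ``$q\equiv 1 \pmod{12}$ iff $q\equiv 1\pmod 4$ and $t^2+Du^2\equiv 1\pmod 3$'' noted just before it. Your additional care in checking the trivial inclusion $H\le G$ and that the hypotheses of Proposition \ref{03} are met is sound but does not change the argument.
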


\begin{proposition}\label{14}
Assume that $k\ge 3$ and $D\equiv 1$ {\rm (mod 4)}. Then the following hold.

{\rm (1)} $G_{k,D,\rho}(x)= F_{k,D,\rho}(x)$.

{\rm(2)} If furthermore $D\equiv 0$ {\rm (mod 3)}, we have $H_{k,D,\rho}(x)= F_{k,D,\rho}(x)$.


\end{proposition}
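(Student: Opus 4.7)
The plan is to reduce both parts to elementary congruence computations on the CM equation $4q=t^2+Du^2$. The key input from the preliminary remarks just above Estimate \ref{F2} is that when $D\equiv 1\pmod 4$ both $t$ and $u$ must be even; so first I would write $t=2t_1$ and $u=2u_1$, which turns the CM equation into $q=t_1^2+Du_1^2$.

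For part (1), since $q$ is an odd prime and $D$ is odd, the equation $q=t_1^2+Du_1^2$ forces exactly one of $t_1,u_1$ to be odd and the other even. I would split into the two subcases and reduce modulo $4$: if $t_1$ is odd and $u_1$ even then $q\equiv 1+0\equiv 1\pmod 4$, while if $t_1$ is even and $u_1$ odd then $q\equiv 0+D\equiv 1\pmod 4$, using $D\equiv 1\pmod 4$. Hence every triple counted by $F_{k,D,\rho}(x)$ automatically satisfies $q\equiv 1\pmod 4$, so $F_{k,D,\rho}(x)\le G_{k,D,\rho}(x)$; the reverse inequality is immediate from the definitions, giving equality.

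For part (2), I would combine (1) with Proposition \ref{03}: when $3\mid D$, that proposition says $t^2+Du^2\equiv 1\pmod 3$ always holds for the triples produced by the Cocks-Pinch method, so $4q\equiv 1\pmod 3$ and hence $q\equiv 1\pmod 3$. Together with $q\equiv 1\pmod 4$ from part (1), the Chinese remainder theorem yields $q\equiv 1\pmod{12}$, so every triple counted by $F_{k,D,\rho}(x)$ is also counted by $H_{k,D,\rho}(x)$, giving equality.

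No step should present a real obstacle; the whole statement is a short parity-and-residue check. The only subtlety is tracking the parity restrictions on $t$ and $u$ imposed by $D\equiv 1\pmod 4$ (as recalled in the preliminary discussion), so that the halved variables $t_1,u_1$ make sense before one reduces modulo $4$; once this is in hand, both congruences are forced automatically rather than holding only probabilistically.
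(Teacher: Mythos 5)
Your proof is correct and follows essentially the same route as the paper: the parity analysis of $t/2$ and $u/2$ forced by $D\equiv 1\pmod 4$ and the oddness of $q$ gives $q\equiv 1\pmod 4$, and part (2) is the same combination with Proposition \ref{03}. You merely spell out the two mod-$4$ subcases that the paper leaves implicit.
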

\begin{proof}
(1)
Since $D\equiv 1$ (mod 4), for a constructed prime $q=\frac{t^2+Du^2}{4}$, $t$ and $u$ must be even. Notice that since $D$ and $q$ are odd, $\frac{t}{2}$ and $\frac{u}{2}$ must have different parities. Thus it is always true that $q\equiv 1$ (mod 4). So we prove (1).

(2) Since $q\equiv 1$ (mod 4), we know that $q\equiv 1$ (mod 12) if and only if $t^2+Du^2\equiv 1$ {\rm (mod 3)}. Then (2) follows from Proposition \ref{03}.
\end{proof}

\begin{proposition}\label{123}
No matter $D\equiv 1$ or $2$  {\rm (mod 3)}, the formula $t^2+Du^2\equiv 1$ {\rm (mod 3)} is true with the probability of $1/2$.
\end{proposition}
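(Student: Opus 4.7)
The plan is to reduce the question to a finite calculation modulo $3$, conditioned on the event forced by primality of $q$. First I would observe that $q$ prime together with $r \geq 5$ excludes $q = 3$ (by the same contradiction used in Proposition \ref{03}: $q = 3$ would force $t = 0$, $D = 3$, $u = \pm 2$, leaving no $r \geq 5$ dividing $q + 1 - t$, and moreover the case $D = 3$ is excluded here). Hence $3 \nmid q$, i.e.\ $3 \nmid t^{2} + Du^{2}$, so the ``probability $1/2$'' claim is really a conditional probability
\[
\Pr\!\left(t^{2} + Du^{2} \equiv 1 \pmod{3} \,\bigm|\, t^{2} + Du^{2} \not\equiv 0 \pmod{3}\right),
\]
evaluated under the heuristic that $t$ and $u$ are independent and uniform modulo $3$.

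Next I would use the standard fact that squares modulo $3$ take values $0$ (when $3 \mid t$, probability $1/3$) and $1$ (when $3 \nmid t$, probability $2/3$), and split into the two residue cases for $D$. If $D \equiv 1 \pmod{3}$, then $t^{2} + Du^{2} \equiv t^{2} + u^{2} \pmod{3}$; tabulating the four combinations of $(t^{2}, u^{2}) \bmod 3$ gives the values $0, 1, 2$ with respective probabilities $1/9, 4/9, 4/9$, and conditioning on the nonzero event yields $(4/9)/(8/9) = 1/2$. If $D \equiv 2 \pmod{3}$, then $t^{2} + Du^{2} \equiv t^{2} - u^{2} \pmod{3}$, and the analogous table gives $0, 1, 2$ with probabilities $5/9, 2/9, 2/9$, so the conditional probability is $(2/9)/(4/9) = 1/2$ once again.

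The only subtle point, and not really an obstacle, is justifying why $t$ and $u$ should be regarded as independently uniform modulo $3$. This is the usual heuristic: in the Cocks-Pinch construction $t$ and $u$ are specified by residue classes modulo the large prime $r$ (which is generically coprime to $3$, since $3 \mid r$ would force $r = 3$ contradicting $k \mid r - 1$ for $k \geq 4$, and $k = 3$ falls under the excluded cases from Estimate \ref{F_{k,D}}), and then lifted to integers in a range of length $\asymp r^{\rho/2}$, so their residues modulo the fixed small modulus $3$ equidistribute. Independence of $t \bmod 3$ and $u \bmod 3$ follows from the fact that $(t', u')$ comes from independent Chinese-remainder-style choices at step \ref{step3}, and the lift step \ref{step4} chooses $t$ and $u$ independently in their residue classes modulo $r$. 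No further computation beyond the $3 \times 3$ tables above is needed.
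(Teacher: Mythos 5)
Your computation is correct and reaches the right conclusion, but by a genuinely different route from the paper. You work purely locally at $3$: you model $(t,u)$ as independent and uniform modulo $3$, tabulate $t^{2}+Du^{2} \bmod 3$ in the two cases $D\equiv 1,2 \pmod 3$, and condition on the value being nonzero, getting $1/2$ both times (your $3\times 3$ tables are right). The paper instead argues globally: it identifies the constructed primes $q$, being norms of $\frac{t+u\sqrt{-D}}{2}$, with the primes that split completely in the Hilbert class field $H_D$ of $\Q(\sqrt{-D})$, observes that $q\bmod 3$ is governed by the splitting of $q$ in $\Q(\sqrt{-3})$, and deduces independence of the two events from $H_D\cap\Q(\sqrt{-3})=\Q$ (which is where $3\nmid D$ enters), so that Dirichlet's theorem gives exactly one half. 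The two arguments differ precisely at the point where yours is most exposed: replacing ``conditional on $q$ prime'' by ``conditional on $3\nmid q$, with $(t,u)$ uniform mod $3$'' tacitly assumes that primality of the special integers $q=(t^{2}+Du^{2})/4$ introduces no bias modulo $3$ beyond the local obstruction, and the paper's class-field-theoretic argument is exactly the justification of that assumption. What each buys: your version is elementary and sits squarely in the Bateman--Horn local-density philosophy used elsewhere in the paper (and it even recovers Proposition \ref{03} when $3\mid D$, since the form then degenerates mod $3$); the paper's version explains structurally that the only possible global obstruction is $\Q(\sqrt{-3})\subseteq H_D$, i.e. $3\mid D^{*}$, which is why the hypothesis $D\equiv 1,2\pmod 3$ is exactly the right one. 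Two small inaccuracies in your closing remark, neither of which affects the proof: Estimate \ref{F_{k,D}} excludes only the pair $(k,D)=(3,3)$, not all of $k=3$; and no case distinction on $k$ is needed to see $3\nmid r$, since $k\mid r-1$ with $k\ge 3$ and $r$ prime already forces $r\ge 5$.
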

\begin{proof}
 Consider the element $\alpha=\frac{t+u\sqrt{-D}}{2}$ of the imaginary quadratic field $\Q(\sqrt{-D})$.
If we denote by $\NN(\cdot)$ the absolute norm of $\Q(\sqrt{-D})$, then $\NN(\alpha)=q$. Then, the event that $q$ is a prime is equivalent to that $q$ splits in $\Q(\sqrt{-D})$ as a product of two principal prime ideals. By the properties of Hilbert class fields, it is equivalent to that $q$ splits completely in $H_D$, where $H_D$ is the Hilbert class field of $\Q(\sqrt{-D})$.

On the other hand, since $q=\frac{t^2+Du^2}{4}$, we see that $t^2+Du^2\equiv 1$ {\rm (mod 3)} if and only if $q\equiv 1$ {\rm (mod 3)}. Furthermore, whether $q\equiv 1$ {\rm (mod 3)} or not only depends on the splitting behavior of $q$ in $\Q(\sqrt{-3})$.

Notice that since $3\nmid D$, we have $H_D\cap \Q(\sqrt{-3})=\Q$. Therefore, the splitting behaviors of $q$ in $H_D$ and in $\Q(\sqrt{-3})$ are independent. Subsequently, the two events that $q$ is a prime and $q\equiv 1$ {\rm (mod 3)} are independent. So by Dirichlet's theorem on arithmetic progressions, asymptotically as $X\to \infty$ half of the primes $q\le X$ constructed by the Cocks-Pinch method satisfy $q\equiv 1$ {\rm (mod 3)}. Then, the desired result follows.

\end{proof}

\begin{proposition}
Assume that $k\ge 3$, $D\equiv 1$ {\rm (mod 4)} and $D\equiv 1,2$ {\rm (mod 3)}. we have $H_{k,D,\rho}(x)\sim \frac{1}{2}F_{k,D,\rho}(x)$.
\end{proposition}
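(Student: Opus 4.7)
The plan is to reduce the statement to the two previously established propositions. First, I would invoke Proposition \ref{14}(1): since $D\equiv 1$ (mod 4), for every triple $(r,t,q)$ constructed by the Cocks-Pinch method with $q$ prime, one automatically has $q\equiv 1$ (mod 4), so $G_{k,D,\rho}(x)=F_{k,D,\rho}(x)$. Hence it suffices to show $H_{k,D,\rho}(x)\sim \tfrac{1}{2}G_{k,D,\rho}(x)$.

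Next, I would use the observation made just before Proposition \ref{03}: since $q=(t^2+Du^2)/4$, the congruence $q\equiv 1$ (mod 12) is equivalent, under $q\equiv 1$ (mod 4), to $t^2+Du^2\equiv 1$ (mod 3), which in turn is equivalent to $q\equiv 1$ (mod 3). So $H_{k,D,\rho}(x)$ counts precisely those triples contributing to $G_{k,D,\rho}(x)$ whose prime $q$ additionally satisfies $q\equiv 1$ (mod 3).

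Now I would apply Proposition \ref{123}. Under the hypothesis $D\equiv 1,2$ (mod 3), the argument there shows that splitting of $q$ in the Hilbert class field $H_D$ (which controls the event ``$q$ is a prime value of the form $(t^2+Du^2)/4$'') is independent of the splitting of $q$ in $\Q(\sqrt{-3})$ (which controls whether $q\equiv 1$ (mod 3)), because $H_D\cap\Q(\sqrt{-3})=\Q$. By Chebotarev/Dirichlet density combined with this independence, among the primes $q$ produced by the Cocks-Pinch construction, asymptotically exactly half satisfy $q\equiv 1$ (mod 3). Since the extra congruence condition on $q$ modulo $3$ is orthogonal to the remaining constraints ($r\le x$, $q\le r^\rho$, the choice of $g$, $t^{\prime}$, $u^{\prime}$, etc.), this 1/2 density passes through the counting that produced the heuristic in Estimate \ref{F2}, yielding $H_{k,D,\rho}(x)\sim \tfrac{1}{2}G_{k,D,\rho}(x)=\tfrac{1}{2}F_{k,D,\rho}(x)$.

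The main subtlety — and the only nonroutine step — is justifying that the ``probability $1/2$'' from Proposition \ref{123} may legitimately be multiplied into the asymptotic for $F_{k,D,\rho}(x)$ uniformly over the range $r\le x$, $q\le r^\rho$. One way to make this rigorous at the heuristic level is to redo the proof of Estimate \ref{F2} in parallel, inserting the extra local factor at the prime $3$: in the Bateman--Horn calculation for $f_u(T)=T^2+Du^2$ restricted to $q\equiv 1$ (mod 3), the local density at $3$ is halved (since $3\nmid D$ means exactly one residue class of $t$ (mod $3$) gives $t^2+Du^2\equiv 1$ (mod $3$), versus two that give a nonzero residue), while all other local factors and the sum over $u$ remain unchanged. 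This reproduces the factor $1/2$ and confirms the asserted asymptotic.
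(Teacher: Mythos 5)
Your argument is correct and follows essentially the same route as the paper: use $D\equiv 1\pmod 4$ to get $q\equiv 1\pmod 4$ automatically (Proposition \ref{14}(1)), reduce $q\equiv 1\pmod{12}$ to the condition $t^2+Du^2\equiv 1\pmod 3$, and then invoke the probability-$1/2$ statement of Proposition \ref{123}. The additional discussion of inserting a halved local factor at $3$ into the Bateman--Horn computation is a reasonable sanity check but goes beyond what the paper does, which simply cites Proposition \ref{123} at this heuristic level.
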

\begin{proof}
Since $D\equiv 1$ {\rm (mod 4)}, we have $q\equiv 1$ (mod 4). So, $q\equiv 1$ (mod 12) if and only if $t^2+Du^2\equiv 1$ {\rm (mod 3)}. Then the desired result follows from Proposition \ref{123}.
\end{proof}

For the case $D\equiv 2,3$ (mod 4), the heuristics are also straightforward.

\begin{estimate}\label{234}
Assume that $k\ge 3$ and $D\equiv 2,3$ (mod 4). Then the following hold heuristically.

{\rm(1)} $G_{k,D,\rho}(x)\sim \frac{1}{2}F_{k,D,\rho}(x)$.

{\rm(2)} If furthermore $D\equiv 0$ {\rm (mod 3)}, we have $H_{k,D,\rho}(x)\sim \frac{1}{2}F_{k,D,\rho}(x).$

{\rm(3)} If furthermore $D\equiv 1,2$ {\rm (mod 3)}, we have $H_{k,D,\rho}(x)\sim \frac{1}{4}F_{k,D,\rho}(x).$


\end{estimate}
\begin{proof}
We divide the proof into three parts according to three cases.

(I) Assume that $D\equiv 2$ (mod 4).

(1)
Since $D\equiv 2$ (mod 4), for a constructed prime $q=\frac{t^2+Du^2}{4}$, $t$ and $u$ must be even.
Notice that since $D$ is even and $q$ is odd, $\frac{t}{2}$ must be odd. Then $(\frac{t}{2})^2+D(\frac{u}{2})^2\equiv 1$ (mod 4) holds only if $\frac{u}{2}$ is even. Suppose that the even parity and odd parity of $\frac{u}{2}$ have the same probability. Then the probability that $q\equiv 1$ (mod 4) is $1/2$, which proves (1).

(2) and (3)
By Propositions \ref{03} and \ref{123}, the probability that $t^2+Du^2\equiv 1$ (mod 3) is $1,1/2$ or $1/2$ corresponding to $D\equiv 0,1$ or $2$  (mod 3), respectively. Since the two events $q\equiv 1$ (mod 4) and $t^2+Du^2\equiv 1$ (mod 3) are independent. Then we can get the desired results.

(II) Assume that $D\equiv 7,15$ (mod 16).

(1)
Since $D\equiv 3$ (mod 4), for a constructed prime $q=\frac{t^2+Du^2}{4}$, $t$ and $u$ must have the same parity. Furthermore, since $D\equiv 7,15$ (mod 16), we claim that $t$ and $u$ must be even.

Suppose that $t$ and $u$ are odd. Consider the CM equation $4q=t^2+Du^2$. Since $q$ is odd, $4q$ is equal to 4 or 12 modulo 16. But $t^2+Du^2$ is equal to 0 or 8 modulo 16 under the condition $D\equiv 7,15$ (mod 16). This leads to a contradiction.

Since $D$ and $q$ are odd, $\frac{t}{2}$ and $\frac{u}{2}$ must have different parities, which is naturally divided into two cases. Suppose that these two cases have the same probability. Then
the probability that $(\frac{t}{2})^2+D(\frac{u}{2})^2\equiv 1$ (mod 4) is $1/2$, which proves (1).

(2) and (3) Apply the same arguments as (I).

(III) Assume that $D\equiv 3,11$ (mod 16).

(1)
Since $D\equiv 3$ (mod 4), for a constructed prime $q=\frac{t^2+Du^2}{4}$, $t$ and $u$ must have the same parity. Furthermore, the two parities may  occur due to $D\equiv 3,11$ (mod 16).

First suppose that both of $t$ and $u$ are even. The deduction and the result of this case are the same as (II).

Now suppose that both of $t$ and $u$ are odd.
Notice that when $n$ is an odd integer, then $n^2\equiv 1, 9$ (mod 16). In this case, pairs $(t^2,u^2)$ can be divided into four classes according to the residue classes modulo 16 which $t^2$ and $u^2$ belong to. Suppose that all the four classes have the same probability. Then, when $D\equiv 3,11$ (mod 16),
the probability that $t^2+Du^2\equiv 4$ (mod 16) is 1/2.

Notice that we obtain the same result for the two parities, then the probability that $q\equiv 1$ (mod 4) is $1/2$. So we prove (1).

(2) and (3) Apply the same arguments as (I).

\end{proof}

From the above results, the heuristics suggest that pairing-friendly curves over pairing-friendly fields can be efficiently constructed by the Cocks-Pinch method. Notice that there are 18 cases in the above proofs according to $D$ modulo 4 or 16 and $D$ modulo 3. In the next section, we will see that the heuristic results of this section are compatible with numerical data.

\begin{remark}
Notice that the above heuristics are independent of the Cocks-Pinch method, they can be applied to any other constructions. So we can say that any efficient construction of pairing-friendly curves is also an efficient construction of pairing-friendly curves over pairing-friendly fields.
\end{remark}

\section{Numerical Evidence}\label{data}
For testing Estimate \ref{F_{k,D}} and the heuristic results in Section \ref{field}, we write a programme in PARI/GP \cite{Pari} to execute the Cocks-Pinch method for searching all the triples $(r,t,q)$ with $k,D$ and $\rho$ being given, and $r$ in some interval $[a,b]$.

For given $k,D,\rho,a$ and $b$,  we denote by $N_{1}(k,D,\rho,a,b)$ the number of triples $(r,t,q)$ as in Estimate \ref{F_{k,D}} with $a\le r\le b$.
We denote by $N_{2}(k,D,\rho,a,b)$
(resp. $N_{3}(k,D,\rho,a,b)$) the number of such triples with $q\equiv 1$ (mod 4) (resp. $q\equiv 1$ (mod 12)). The outputs of the programme are these three quantities.

For $N_{1}(k,D,\rho,a,b)$, under some assumptions, there exists a heuristic formula from Estimate \ref{F_{k,D}}, stated as follows
\begin{equation}\label{Iab}
I(k,D,\rho,a,b)=\frac{e(k,D)w_{D}}{2\rho h_{D}}\int_{a}^{b}\frac{dz}{z^{2-\rho}(\log z)^2}.
\end{equation}
Let $I_1=e(k,D)^{-1}I(k,D,\rho,a,b)$. Then $I_1$ depends only on $D$ and $\rho$ but not on $k$.

In Section \ref{field}, we present some definite or heuristic results about the relations among $N_{i}(k,D,\rho,a,b)$, $i=1,2,3$. We list them as follows,
\begin{equation}\label{N12}
\left\{ \begin{array}{ll}
                 N_{2}(k,D,\rho,a,b)=N_{1}(k,D,\rho,a,b) & \textrm{if $D\equiv 1$ (mod 4)},\\
                 N_{2}(k,D,\rho,a,b)\approx\frac{1}{2}N_{1}(k,D,\rho,a,b) & \textrm{if $D\equiv 2,3$ (mod 4)};
                 \end{array} \right.
\end{equation}
\begin{equation}\label{N13}
\left\{ \begin{array}{ll}
                 N_{3}(k,D,\rho,a,b)=N_{1}(k,D,\rho,a,b) & \textrm{if $D\equiv 1$ (mod 4) and $D\equiv 0$ (mod 3)},\\
                 N_{3}(k,D,\rho,a,b)\approx\frac{1}{2}N_{1}(k,D,\rho,a,b) & \textrm{if $D\equiv 1$  (mod 4) and $D\equiv 1,2$ (mod 3)},\\
                 N_{3}(k,D,\rho,a,b)\approx\frac{1}{2} N_{1}(k,D,\rho,a,b) & \textrm{if $D\equiv 2,3$ (mod 4) and $D\equiv 0$ (mod 3)},\\
                 N_{3}(k,D,\rho,a,b)\approx\frac{1}{4}N_{1}(k,D,\rho,a,b) & \textrm{if $D\equiv 2,3$  (mod 4) and $D\equiv 1,2$ (mod 3)};
                 \end{array} \right.
\end{equation}
\begin{equation}\label{N23}
N_{2}(k,D,\rho,a,b)=N_{3}(k,D,\rho,a,b),\quad \textrm{if $D\equiv 0$ (mod 3)}.
\end{equation}

Similar as $I_1$ and by (\ref{N12}) and (\ref{N13}), we define $I_2$ and $I_3$ by analogy with $N_{2}(k,D,\rho,a,b)$ and $N_{3}(k,D,\rho,a,b)$,  respectively.

In this section, we will test all these results by numerical data.

In fact, \cite[Table 1 and Table 2]{Boxall2012} gave the values of $N_1(k,D,1.7,10^6,85 ~698 ~768)$ and $N_1(k,D,1.5,10^6,2\times 10^8)$ respectively, for $3\le k\le 30$ and all square-free integer $D$ with $D\le 15$. These two tables are compatible with (\ref{Iab}). In the sequel, we will choose more narrow interval $[a,b]$ and even choose $a=5$ for testing.

Here, for each entry in the following tables, if its actual value is not an integer, then it is rounded to the nearest
whole number.

Table \ref{1.8-1} gives the values of  $N_{1}(k,D,1.8,5,5\times 10^5)$ for all $k$ with $3\le k\le 18$ and various square-free $D$. Notice that in Section \ref{field} there are 18 cases according to $D$ modulo 4 (or 16) and $D$ modulo 3. The choices of $D$ here exactly cover all these cases.
The second line gives the value of $I_1$. The main part of the table contains the values of $N_{1}(k,D,1.8,5,5\times 10^5)$, the entries corresponding to values of $(k,D)$ with $e(k,D)=2$ are highlighted in bold; (\ref{Iab}) predicts that they should be close to $2I_1$ and thus roughly twice as large as the other entries in the same column. The entries corresponding to values of $(k,D)=(3,3),(4,1)$ and $(6,3)$ are left blank. The last line gives the average value of each column as $k$ varies from 3 to 18, the cases where $e(k,D)=2$ being counted with weight $\frac{1}{2}$ and the excluded values $(k,D)=(3,3),(4,1)$ and $(6,3)$ omitted. (\ref{Iab}) predicts that each of these averages should be close to $I_1$.

Table \ref{1.8-2} gives the values of  $N_{2}(k,D,1.8,5,5\times 10^5)$ for the same values of $(k,D)$ as Table \ref{1.8-1}. When $D\equiv 1$ (mod 4), (\ref{N12}) tells us that $N_{2}(k,D,1.8,5,5\times 10^5)=N_{1}(k,D,1.8,5,5\times 10^5)$ for each value of $(k,D)$. Otherwise, when $D\equiv 2,3$ (mod 4), (\ref{N12}) predicts that $N_{2}(k,D,1.8,5,5\times 10^5)$ should be close to half of $N_{1}(k,D,1.8,5,5\times 10^5)$.

Table \ref{1.8-3} gives the values of  $N_{3}(k,D,1.8,5,5\times 10^5)$ for the same values of $(k,D)$ as Table \ref{1.8-1}. (\ref{N13}) presents some definite or heuristic results about the relation between $N_{3}(k,D,1.8,5,5\times 10^5)$ and $N_{1}(k,D,1.8,5,5\times 10^5)$. For example, when $D\equiv 1$ (mod 4) and $D\equiv 0$ (mod 3), we have $N_{3}(k,D,1.8,5,5\times 10^5)=N_{1}(k,D,1.8,5,5\times 10^5)$. If $3|D$, (\ref{N23}) says that $N_{2}(k,D,1.8,5,5\times 10^5)=N_{3}(k,D,1.8,5,5\times 10^5)$.

The explanations of Tables \ref{2-1}, \ref{2-2} and \ref{2-3} are the same as Tables \ref{1.8-1}, \ref{1.8-2} and \ref{1.8-3}, respectively. Here, we choose another choices of $D$ to exactly cover the 18 cases in Section \ref{field}.

Although Tables \ref{1.8-1}--\ref{2-3} show that (\ref{N12})--(\ref{N23}) are supported by numerical data, there is some discrepancy between the expected values and the calculated values. For Tables \ref{1.8-1} and \ref{2-1}, this is expected. Because for the Bateman-Horn conjecture, there seems to be no good conjecture for the remainder, for example see \cite{Korevaar2010} for a discussion of the case of prime pairs. Thus, it may be also a hard problem to find one in the context of Estimate \ref{F_{k,D}}. The discrepancy in Tables \ref{1.8-2}, \ref{1.8-3}, \ref{2-2} and \ref{2-3} arises from the assumptions made in Section \ref{field}, it seems also hard to make them more precisely. But most of the calculated values and all the average values are close to the expected values, this make us have confidence in the heuristic results.

Table \ref{ideal} gives the values of $N_{i}(12,3,\rho,10^4,10^8)$ for various $\rho$ and $i=1,2,3$. It shows that there is a big gap between $I(12,3,\rho,10^4,10^8)$ and $N_{1}(12,3,\rho,10^4,10^8)$ when $\rho<1.25$, because in this case the Barreto-Naehrig family makes the assumptions in Estimate \ref{F_{k,D}} not satisfied.  But in this exceptional case, (\ref{N12})--(\ref{N23}) are also compatible with numerical data.

\begin{table}
{\footnotesize
\caption{Values of $N_{1}(k,D,1.8,5,5\times 10^5)$ for various $k$ and $D$ (see Section \ref{data} for explanations) }\label{1.8-1}
\begin{tabular}{|p{0.8cm}|p{0.4cm}|p{0.4cm}|p{0.55cm}|p{0.35cm}|p{0.35cm}|p{0.4cm}|p{0.35cm}|p{0.4cm}|p{0.4cm}|p{0.35cm}|p{0.25cm}|p{0.25cm}|p{0.25cm}|p{0.35cm}|p{0.25cm}|p{0.35cm}|p{0.25cm}|p{0.35cm}|}
\hline
\quad $D$ & 1 & 2 & 3 & 5 & 6 & 7 & 10 & 11 & 15 & 19 & 21 & 23 & 31 & 35 & 39 & 43 & 47 & 123   \\ \hline

\quad $I_1$ & 377 & 189 & 566 & 94 & 94 & 189 & 94 & 189 & 94 & 189 & 47 & 63 & 63 & 94 & 47 & 189 & 38 & 94   \\ \hline


$k=3$  & 403 & 184 &  & 101 & 89 & 174 & 85 & 196 & 88 & 222 & 44 & 75 & 62 & 105 & 43 & 198 & 42 & 94  \\ \hline

\quad$4$&  & 174 & 583 & 112 & 107 & 221 & 97 & 211 & 87 & 196 & 58 & 49 & 68 & 101 & 49 & 203 & 32 & 126 \\ \hline

\quad$5$ & 429 & 217 & 570 & 105 & 96 & 218 & 101 & 184 & 92 & 213 & 48 & 60 & 63 & 100 & 53 & 212 & 37 & 103 \\ \hline

\quad$6$ & 388 & 193 &  & 95 & 105 & 199 & 109 & 180 & 88 & 182 & 52 & 57 & 62 & 107 & 60 & 206 & 44 & 116   \\ \hline

\quad$7$ & 420 & 193 & 627 & 96 & 92 & \textbf{374} & 94 & 195 & 104 & 202 & 42 & 75 & 74 & 88 & 44 & 218 & 34 & 109 \\ \hline

\quad$8$ & \textbf{802} & \textbf{365} & 592 & 130 & 85 & 172 & 88 & 200 & 103 & 200 & 57 & 71 & 54 & 89 & 51 & 176 & 44 & 111 \\ \hline

\quad$9$ & 371 & 182 & \textbf{1190} & 93 & 117 & 188 & 105 & 215 & 92 & 194 & 53 & 74 & 64 & 100 & 40 & 183 & 38 & 99   \\ \hline

\quad$10$ & 409 & 189 & 592 & 107 & 95 & 206 & 92 & 197 & 109 & 199 & 46 & 65 & 55 & 83 & 33 & 231 & 32 & 94 \\ \hline

\quad$11$ & 371 & 179 & 589 & 95 & 91 & 178 & 105 & \textbf{395} & 86 & 186 & 53 & 60 & 59 & 98 & 43 & 182 & 41 & 94 \\ \hline

\quad$12$ & \textbf{846} & 182 & \textbf{1230} & 85 & 87 & 206 & 101 & 181 & 85 & 189 & 50 & 57 & 69 & 91 & 49 & 197 & 28 & 96   \\ \hline

\quad$13$ & 380 & 197 & 622 & 99 & 79 & 180 & 102 & 200 & 89 & 206 & 47 & 60 & 61 & 93 & 40 & 172 & 35 & 106 \\ \hline

\quad$14$ & 413 & 190 & 582 & 78 & 83 & \textbf{423} & 99 & 197 & 89 & 202 & 55 & 68 & 57 & 94 & 49 & 217 & 29 & 97 \\ \hline

\quad$15$ & 405 & 184 & \textbf{1167} & 93 & 109 & 187 & 89 & 185 & \textbf{173} & 208 & 44 & 54 & 74 & 100 & 50 & 178 & 51 & 106 \\ \hline

\quad$16$ & \textbf{800} & \textbf{386} & 609 & 101 & 95 & 175 & 84 & 201 & 84 & 201 & 48 & 55 & 74 & 81 & 43 & 201 & 52 & 96 \\ \hline

\quad$17$ & 358 & 202 & 579 & 98 & 103 & 193 & 103 & 202 & 100 & 227 & 49 & 72 & 69 & 88 & 40 & 208 & 52 & 114   \\ \hline

\quad$18$ & 397 & 201 & \textbf{1203} & 87 & 91 & 195 & 100 & 209 & 90 & 195 & 54 & 55 & 79 & 106 & 51 & 190 & 43 & 91   \\ \hline

\ \ Avg & 398 & 190 & 596 & 98 & 95 & 193 & 97 & 197 & 97 & 201 & 50 & 63 & 65 & 95 & 46 & 198 & 40 & 103   \\ \hline
\end{tabular}
}
\end{table}

\begin{table}
{\footnotesize
\caption{Values of $N_{2}(k,D,1.8,5,5\times 10^5)$ for various $k$ and $D$  (see Section \ref{data} for explanations) }\label{1.8-2}
\begin{tabular}{|p{0.8cm}|p{0.4cm}|p{0.4cm}|p{0.55cm}|p{0.35cm}|p{0.35cm}|p{0.4cm}|p{0.35cm}|p{0.4cm}|p{0.4cm}|p{0.35cm}|p{0.25cm}|p{0.25cm}|p{0.25cm}|p{0.35cm}|p{0.25cm}|p{0.35cm}|p{0.25cm}|p{0.35cm}|}
\hline
\quad $D$ & 1 & 2 & 3 & 5 & 6 & 7 & 10 & 11 & 15 & 19 & 21 & 23 & 31 & 35 & 39 & 43 & 47 & 123   \\ \hline

\quad $I_2$ & 377 & 94 & 283 & 94 & 47 & 94 & 47 & 94 & 47 & 94 & 47 & 31 & 31 & 47 & 24 & 94 & 19 & 47   \\ \hline

$k=3$  & 403 & 84 &  & 101 & 52 & 84 & 34 & 101 & 48 & 109 & 44 & 38 & 28 & 46 & 25 & 93 & 18 & 41  \\ \hline

\quad$4$&  & 83 & 305 & 112 & 50 & 96 & 38 & 111 & 43 & 99 & 58 & 22 & 27 & 51 & 26 & 105 & 15 & 59 \\ \hline

\quad$5$ & 429 & 118 & 290 & 105 & 42 & 107 & 55 & 95 & 43 & 97 & 48 & 31 & 33 & 48 & 22 & 86 & 19 & 57 \\ \hline

\quad$6$ & 388 & 104 &  & 95 & 62 & 103 & 48 & 89 & 45 & 97 & 52 & 28 & 35 & 50 & 26 & 94 & 20 & 64  \\ \hline

\quad$7$ & 420 & 95 & 304 & 96 & 49 & \textbf{203} & 40 & 94 & 49 & 96 & 42 & 34 & 29 & 47 & 23 & 97 & 12 & 56 \\ \hline

\quad$8$ & \textbf{802} & \textbf{186} & 297 & 130 & 42 & 87 & 40 & 84 & 57 & 101 & 57 & 33 & 27 & 52 & 30 & 83 & 17 & 57 \\ \hline

\quad$9$ & 371 & 86 & \textbf{603} & 93 & 60 & 90 & 47 & 109 & 54 & 109 & 53 & 38 & 32 & 41 & 23 & 100 & 15 & 59   \\ \hline

\quad$10$ & 409 & 105 & 289 & 107 & 45 & 103 & 45 & 103 & 49 & 96 & 46 & 34 & 24 & 48 & 19 & 120 & 20 & 50 \\ \hline

\quad$11$ & 371 & 99 & 260 & 95 & 44 & 89 & 47 & \textbf{184} & 43 & 102 & 53 & 31 & 31 & 53 & 21 & 92 & 24 & 41 \\ \hline

\quad$12$ & \textbf{846} & 91 & \textbf{623} & 85 & 36 & 81 & 56 & 90 & 39 & 109 & 50 & 30 & 37 & 48 & 24 & 96 & 14 & 53   \\ \hline

\quad$13$ & 380 & 100 & 312 &99 & 32 & 96 & 49 & 110 & 56 & 102 & 47 & 30 & 23 & 46 & 17 & 80 & 11 & 54 \\ \hline

\quad$14$ & 413 & 92 & 271 & 78 & 47 & \textbf{215} & 52 & 104 & 49 & 110 & 55 & 38 & 35 & 42 & 26 & 118 & 13 & 41 \\ \hline

\quad$15$ & 405 & 93 & \textbf{574} & 93 & 61 & 103 & 41 & 93 & \textbf{86} & 112 & 44 & 30 & 32 & 49 & 16 & 93 & 22 & 48 \\ \hline

\quad$16$ & \textbf{800} & \textbf{195} & 314 & 101 & 43 & 89 & 38 & 111 & 44 & 102 & 48 & 25 & 38 & 46 & 25 & 109 & 26 & 46 \\ \hline

\quad$17$ & 358 & 96 & 296 & 98 & 55 & 93 & 50 & 94 & 49 & 113 & 49 & 33 & 34 & 40 & 26 & 112 & 28 & 55   \\ \hline

\quad$18$ & 397 & 105 & \textbf{653} & 87 & 47 & 101 & 51 & 96 & 51 & 102 & 54 & 28 & 45 & 53 & 24 & 97 & 18 & 34   \\ \hline

\ \ Avg & 398 & 96 & 297 & 98 & 48 & 96 & 46 & 99 & 48 & 104 & 50 & 31 & 32 & 48 & 23 & 98 & 18 & 51   \\ \hline
\end{tabular}
}
\end{table}

\begin{table}
{\footnotesize
\caption{Values of $N_{3}(k,D,1.8,5,5\times 10^5)$ for various $k$ and $D$ (see Section \ref{data} for explanations) }\label{1.8-3}
\begin{tabular}{|p{0.8cm}|p{0.4cm}|p{0.4cm}|p{0.55cm}|p{0.35cm}|p{0.35cm}|p{0.4cm}|p{0.35cm}|p{0.4cm}|p{0.4cm}|p{0.35cm}|p{0.25cm}|p{0.25cm}|p{0.25cm}|p{0.35cm}|p{0.25cm}|p{0.35cm}|p{0.25cm}|p{0.35cm}|}
\hline
\quad $D$ & 1 & 2 & 3 & 5 & 6 & 7 & 10 & 11 & 15 & 19 & 21 & 23 & 31 & 35 & 39 & 43 & 47 & 123   \\ \hline

\quad $I_3$ & 189 & 47 & 283 & 47 & 47 & 47 & 24 & 47 & 47 & 47 & 47 & 16 & 16 & 24 & 24 & 47 & 9 & 47   \\ \hline

$k=3$  & 193 & 42 &  & 46 & 52 & 43 & 14 & 53 & 48 & 59 & 44 & 20 & 9 & 25 & 25 & 45 & 8 & 41   \\ \hline

\quad$4$&  & 35 & 305 & 54 & 50 & 48 & 17 & 55 & 43 & 47 & 58 & 9 & 16 & 27 & 26 & 59 & 8 & 59 \\ \hline

\quad$5$ & 233 & 69 & 290 & 46 & 42 & 51 & 24 & 43 & 43 & 40 & 48 & 11 & 17 & 25 & 22 & 40 & 8 & 57 \\ \hline

\quad$6$ & 193 & 45 &  & 50 & 62 & 42 & 20 & 42 & 45 & 48 & 52 & 8 & 16 & 29 & 26 & 45 & 10 & 64  \\ \hline

\quad$7$ & 215 & 51 & 304 & 55 & 49 & \textbf{111} & 19 & 43 & 49 & 50 & 42 & 20 & 13 & 19 & 23 & 49 & 6 & 56 \\ \hline

\quad$8$ & \textbf{402} & \textbf{84} & 297 & 60 & 42 & 40 & 21 & 40 & 57 & 59 & 57 & 13 & 12 & 26 & 30 & 45 & 6 & 57 \\ \hline

\quad$9$ & 186 & 40 & \textbf{603} & 43 & 60 & 46 & 25 & 54 & 54 & 56 & 53 & 18 & 17 & 18 & 23 & 41 & 6 & 59   \\ \hline

\quad$10$ & 198 & 55 & 289 & 56 & 45 & 55 & 18 & 47 & 49 & 45 & 46 & 19 & 6 & 18 & 19 & 63 & 10 & 50 \\ \hline

\quad$11$ & 187 & 42 & 260 & 50 & 44 & 46 & 25 & \textbf{90} & 43 & 61 & 53 & 18 & 12 & 21 & 21 & 49 & 11 & 41 \\ \hline

\quad$12$ & \textbf{414 }& 37 & \textbf{623} & 44 & 36 & 43 & 28 & 52 & 39 & 55 & 50 & 21 & 21 & 25 & 24 & 46 & 6 & 53   \\ \hline

\quad$13$ & 203 & 53 & 312 & 42 & 32 & 37 & 24 & 59 & 56 & 47 & 47 & 13 & 10 & 23 & 17 & 31 & 3 & 54 \\ \hline

\quad$14$ & 209 & 50 & 271 & 42 & 47 & \textbf{104} & 27 & 50 & 49 & 53 & 55 & 17 & 17 & 22 & 26 & 66 & 6 & 41 \\ \hline

\quad$15$ & 185 & 57 & \textbf{574} & 46 & 61 & 49 & 15 & 49 & \textbf{86} &  45 & 44 & 16 & 18 & 34 & 16 & 50 & 10 & 48 \\ \hline

\quad$16$ & \textbf{401} & \textbf{106} & 314 & 45 & 43 & 43 & 13 & 54 & 44 & 45 & 48 & 12 & 13 & 24 & 25 & 64 & 14 & 46 \\ \hline

\quad$17$ & 179 & 45 & 296 & 52 & 55 & 41 & 23 & 41 & 49 & 58 & 49 & 18 & 20 & 22 & 26 & 57 & 14 & 55   \\ \hline

\quad$18$ & 199 & 49 & \textbf{653} & 46 & 47 & 45 & 23 & 49 & 51 & 54 & 54 & 13 & 24 & 26 & 24 & 42 & 3 & 34   \\ \hline

\ \ Avg & 199 & 48 & 297 & 49 & 48 & 46 & 21 & 49 & 48 & 51 & 50 & 15 & 15 & 24 & 23 & 50 & 8 & 51   \\ \hline

\end{tabular}
}
\end{table}

\begin{table}
{\footnotesize
\caption{Values of $N_{1}(k,D,2,5,10^5)$ for various $k$ and $D$ (see Section \ref{data} for explanations) }\label{2-1}
\begin{tabular}{|p{0.8cm}|p{0.35cm}|p{0.35cm}|p{0.35cm}|p{0.35cm}|p{0.35cm}|p{0.35cm}|p{0.35cm}|p{0.35cm}|p{0.35cm}|p{0.35cm}|p{0.35cm}|p{0.35cm}|p{0.35cm}|p{0.35cm}|p{0.35cm}|p{0.3cm}|p{0.35cm}|p{0.35cm}|}
\hline
\quad $D$ & 13 & 14 & 17 & 22 & 30 & 33 & 51 & 55 & 59 & 67 & 71 & 79 & 83 & 87 & 91 & 95 & 111 & 219   \\ \hline

\quad $I_1$ & 236 & 118 & 118 & 236 & 118 & 118 & 236 & 118 & 157 & 472 & 67 & 94 & 157 & 79 & 236 & 59 & 59 & 118   \\ \hline


$k=3$ & 248 & 115 & 132 & 240 & 109 & 131 & 256 & 135 & 156 & 513 & 89 & 91 & 149 & 81 & 229 & 56 & 58 & 117     \\ \hline

\quad$4$ & 251 & 118 & 119 & 250 & 138 & 116 & 227 & 128 & 194 & 498 & 77 & 106 & 167 & 86 & 242 & 75 & 67 & 144     \\ \hline

\quad$5$ & 249 & 117 & 126 & 272 & 100 & 109 & 227 & 119 & 170 & 488 & 66 & 92 & 149 & 78 & 250 & 57 & 63 & 144   \\ \hline

\quad$6$ & 261 & 118 & 104 & 273 & 133 & 106 & 229 & 118 & 171 & 514 & 72 & 85 & 203 & 77 & 249 & 62 & 64 & 107     \\ \hline

\quad$7$ & 244 & 131 & 130 & 229 & 122 & 132 & 250 & 120 & 152 & 498 & 79 & 104 & 180 & 81 & 240 & 64 & 65 & 133     \\ \hline

\quad$8$ & 277 & 111 & 128 & 238 & 111 & 116 & 269 & 124 & 127 & 480 & 79 & 93 & 150 & 72 & 238 & 65 & 54 & 112     \\ \hline

\quad$9$ & 264 & 139 & 136 & 248 & 118 & 109 & 236 & 125 & 164 & 522 & 62 & 104 & 156 & 75 & 256 & 56 & 74 & 109     \\ \hline

\quad$10$ & 233 & 126 & 125 & 246 & 131 & 103 & 230 & 102 & 168 & 486 & 58 & 103 & 161 & 78 & 254 & 66 & 54 & 121    \\ \hline

\quad$11$ & 240 & 117 & 126 & 223 & 131 & 135 & 239 & 124 & 156 & 441 & 65 & 101 & 174 & 96 & 253 & 59 & 58 & 99    \\ \hline

\quad$12$ & 243 & 125 & 110 & 245 & 116 & 128 & 211 & 125 & 151 & 503 & 75 & 87 & 152 & 79 & 244 & 63 & 52 & 126   \\ \hline

\quad$13$ & 256 & 124 & 121 & 237 & 118 & 116 & 285 & 114 & 167 & 493 & 62 & 96 & 152 & 88 & 249 & 57 & 49 & 137   \\ \hline

\quad$14$ & 246 & 127 & 131 & 225 & 136 & 128 & 253 & 114 & 164 & 475 & 69 & 87 & 163 & 74 & 235 & 66 & 67 & 119   \\ \hline

\quad$15$ & 257 & 117 & 109 & 265 & 108 & 108 & 249 & 119 & 137 & 453 & 51 & 111 & 177 & 88 & 240 & 68 & 62 & 130    \\ \hline

\quad$16$ & 250 & 121 & 106 & 250 & 112 & 106 & 242 & 108 & 178 & 454 & 66 & 91 & 165 & 81 & 223 & 60 & 68 & 122   \\ \hline

\quad$17$ & 240 & 110 & 147 & 240 & 130 & 119 & 227 & 107 & 155 & 454 & 74 & 107 & 147 & 93 & 248 & 70 & 67 & 138   \\ \hline

\quad$18$ & 235 & 125 & 105 & 227 & 125 & 128 & 266 & 141 & 171 & 496 & 72 & 104 & 147 & 85 & 237 & 81 & 63 & 136     \\ \hline

\ \ Avg & 250 & 121 & 122 & 244 & 121 & 118 & 244 & 120 & 161 & 486 & 70 & 98 & 162 & 82 & 243 & 64 & 62 & 125     \\ \hline
\end{tabular}
}
\end{table}

\begin{table}
{\footnotesize
\caption{Values of $N_{2}(k,D,2,5,10^5)$ for various $k$ and $D$  (see Section \ref{data} for explanations) }\label{2-2}
\begin{tabular}{|p{0.8cm}|p{0.35cm}|p{0.35cm}|p{0.35cm}|p{0.35cm}|p{0.35cm}|p{0.35cm}|p{0.35cm}|p{0.35cm}|p{0.35cm}|p{0.35cm}|p{0.35cm}|p{0.35cm}|p{0.35cm}|p{0.35cm}|p{0.35cm}|p{0.3cm}|p{0.35cm}|p{0.35cm}|}
\hline
\quad $D$ & 13 & 14 & 17 & 22 & 30 & 33 & 51 & 55 & 59 & 67 & 71 & 79 & 83 & 87 & 91 & 95 & 111 & 219   \\ \hline

\quad $I_2$ & 236 & 59 & 118 & 118 & 59 & 118 & 118 & 59 & 79 & 236 & 34 & 47 & 79 & 39 & 118 & 29 & 29 & 59   \\ \hline

$k=3$ & 248 & 56 & 132 & 137 & 60 & 131 & 130 & 68 & 79 & 268 & 38 & 42 & 82 & 40 & 112 & 33 & 30 & 54     \\ \hline

\quad$4$ & 251 & 64 & 119 & 129 & 67 & 116 & 110 & 69 & 103 & 238 & 42 & 53 & 86 & 43 &127 & 30 & 33 & 78     \\ \hline

\quad$5$ & 249 & 70 & 126 & 131 & 55 & 109 & 115 & 63 & 82 & 244 & 31 & 42 & 75 & 45 & 113 & 28 & 27 & 78   \\ \hline

\quad$6$ & 261 & 56 & 104 & 134 & 64 & 106 & 102 & 56 & 86 & 245 & 26 & 52 & 102 & 30 &122 & 37 & 30 & 59     \\ \hline

\quad$7$ & 244 & 65 & 130 & 108 & 60 & 132 & 130 & 55 & 76 & 239 & 45 & 51 & 91 & 44 & 119 & 30 & 24 & 62    \\ \hline

\quad$8$ & 277 & 60 & 128 & 117 & 61 & 116 & 117 & 62 & 62 & 237 & 36 & 40 & 77 & 38 & 120 & 35 & 32 & 50    \\ \hline

\quad$9$ & 264 & 72 & 136 & 113 & 62 & 109 & 126 & 65 & 73 & 266 & 31 & 52 & 91 & 36 & 124 & 26 & 38 & 55     \\ \hline

\quad$10$ & 233 & 64 & 125 & 117 & 67 & 103 & 121 & 47 & 85 & 248 & 26 & 57 & 79 & 30 & 123 & 30 & 22 & 54    \\ \hline

\quad$11$ & 240 & 56 & 126 & 113 & 60 & 135 & 116 & 59 & 77 & 239 & 33 & 52 & 95 & 44 & 119 & 30 & 30 & 48   \\ \hline

\quad$12$ & 243 & 73 & 110 & 131 & 58 & 128 & 108 & 65 & 83 & 250 & 36 & 42 & 87 & 36 & 125 & 32 & 30 & 54   \\ \hline

\quad$13$ & 256 & 62 & 121 & 129 & 53 & 116 & 133 & 61 & 87 & 240 & 31 & 45 & 80 & 32 & 113 & 29 & 24 & 58   \\ \hline

\quad$14$ & 246 & 62 & 131 & 105 & 62 & 128 & 129 & 59 & 79 & 254 & 31 & 40 & 96 & 40 & 129 & 34 & 37 & 65  \\ \hline

\quad$15$ & 257 & 60 & 109 & 132 & 59 & 108 & 124 & 53 & 52 & 233 & 19 & 69 & 86 & 51 & 122 & 33 & 32 & 68    \\ \hline

\quad$16$ & 250 & 63 & 106 & 126 & 56 & 106 & 127 & 55 & 93 & 228 & 29 & 53 & 82 & 53 & 120 & 28 & 28 & 64   \\ \hline

\quad$17$ & 240 & 61 & 147 & 122 & 64 & 119 & 125 & 62 & 80 & 214 & 33 & 50 & 80 & 53 & 128 & 27 & 31 & 68   \\ \hline

\quad$18$ & 235 & 63 & 105 & 112 & 51 & 128 & 141 & 78 & 89 & 249 & 28 & 46 & 73 & 45 & 128 & 37 & 32 & 74     \\ \hline

\ \ Avg & 250 & 63 & 122 & 122 & 60 & 118 & 122 & 61 & 80 & 243 & 32 & 49 & 85 & 41 & 122 & 31 & 30 & 62     \\ \hline
\end{tabular}
}
\end{table}

\begin{table}
{\footnotesize
\caption{Values of $N_{3}(k,D,2,5,10^5)$ for various $k$ and $D$  (see Section \ref{data} for explanations) }\label{2-3}
\begin{tabular}{|p{0.8cm}|p{0.35cm}|p{0.35cm}|p{0.35cm}|p{0.35cm}|p{0.35cm}|p{0.35cm}|p{0.35cm}|p{0.35cm}|p{0.35cm}|p{0.35cm}|p{0.35cm}|p{0.35cm}|p{0.35cm}|p{0.35cm}|p{0.35cm}|p{0.3cm}|p{0.35cm}|p{0.35cm}|}
\hline
\quad $D$ & 13 & 14 & 17 & 22 & 30 & 33 & 51 & 55 & 59 & 67 & 71 & 79 & 83 & 87 & 91 & 95 & 111 & 219   \\ \hline

\quad $I_3$ & 118 & 29 & 59 & 59 & 59 & 118 & 118 & 29 & 39 & 118 & 17 & 24 & 39 & 39 & 59 & 15 & 29 & 59   \\ \hline

$k=3$ & 141 & 32 & 65 & 69 & 60 & 131 & 130 & 35 & 36 & 127 & 19 & 23 & 46 & 40 & 56 & 16 & 30 & 54     \\ \hline

\quad$4$ & 139 & 32 & 70 & 63 & 67 & 116 & 110 & 37 & 50 & 114 & 22 & 23 & 43 & 43 & 56 & 13 & 33 & 78     \\ \hline

\quad$5$ & 127 & 31 & 63 & 64 & 55 & 109 & 115 & 38 & 40 & 119 & 13 & 23 & 40 & 45 & 50 & 20 & 27 & 78   \\ \hline

\quad$6$ & 129 & 32 & 54 & 70 & 64 & 106 & 102 & 29 & 43 & 110 & 11 & 23 & 51 & 30 & 68 & 16 & 30 & 59     \\ \hline

\quad$7$ & 128 & 33 & 62 & 51 & 60 & 132 & 130 & 24 & 33 & 125 & 22 & 26 & 50 & 44 & 68 & 13 & 24 & 62     \\ \hline

\quad$8$ & 130 & 28 & 60 & 67 & 61 & 116 & 117 & 31 & 34 & 115 & 20 & 16 & 33 & 38 & 66 & 18 & 32 & 50     \\ \hline

\quad$9$ & 116 & 32 & 71 & 58 & 62 & 109 & 126 & 28 & 33 & 135 & 15 & 27 & 51 & 36 & 56 & 11 & 38 & 55     \\ \hline

\quad$10$ & 130 & 41 & 61 & 58 & 67 & 103 & 121 & 31 & 43 & 129 & 10 & 27 & 42 & 30 & 61 & 14 & 22 & 54    \\ \hline

\quad$11$ & 110 & 21 & 66 & 56 & 60 & 135 & 116 & 28 & 37 & 120 & 13 & 25 & 43 & 44 & 54 & 14 & 30 & 48   \\ \hline

\quad$12$ & 123 & 38 & 46 & 59 & 58 & 128 & 108 & 35 & 45 & 110 & 16 & 18 & 47 & 36 & 63 & 13 & 30 & 54   \\ \hline

\quad$13$ & 115 & 30 & 58 & 72 & 53 & 116 & 133 & 36 & 49 & 113 & 17 & 20 & 38 & 32 & 52 & 14 & 24 & 58   \\ \hline

\quad$14$ & 115 & 30 & 64 & 60 & 62 & 128 & 129 & 28 & 36 & 139 & 16 & 18 & 45 & 40 & 64 & 21 & 37 & 65   \\ \hline

\quad$15$ & 114 & 41 & 54 & 64 & 59 & 108 & 124 & 30 & 24 & 124 & 8 & 32 & 48 & 51 & 47 & 15 & 32 & 68    \\ \hline

\quad$16$ & 129 & 37 & 58 & 61 & 56 & 106 & 127 & 28 & 52 & 111 & 15 & 31 & 44 &53 & 64 & 9 & 28 & 64   \\ \hline

\quad$17$ & 107 & 38 & 88 & 59 & 64 & 119 & 125 & 26 & 40 & 111 & 20 & 25 & 47 & 53 & 61 & 11 & 31 & 68   \\ \hline

\quad$18$ & 123 & 25 & 53 & 50 & 51 & 128 & 141 & 36 & 48 & 126 & 17 & 17 & 37 & 45 & 69 & 18 & 32 & 74   \\ \hline

\ \ Avg & 124 & 33 & 62 & 61 & 60 & 118 & 122 & 31 & 40 & 121 & 16 & 23 & 44 & 41 & 60 & 15 & 30 & 62     \\ \hline
\end{tabular}
}
\end{table}

\begin{table}
\centering
\caption{Values of $N_{i}(12,3,\rho,10^4,10^8),i=1,2,3$, for various $\rho$ (see Section \ref{data} for explanations)}\label{ideal}
\begin{tabular}{|c|c|c|c|c|c|c|c|c|c|c|}
\hline
$\rho$ & 1.1 & 1.15 & 1.2 & 1.25 & 1.3 & 1.35 & 1.4 & 1.45 & 1.5 & 1.55    \\ \hline

 $I(12,3,\rho,10^4,10^8)$ & 1 & 2  & 4 & 8 & 16 & 32 & 67 & 142 & 304 & 658 \\ \hline

$N_{1}(12,3,\rho,10^4,10^8)$ & 8 & 12 & 15 & 22 & 33 & 47 & 83 & 177 & 355 & 706\\ \hline

$N_{2}(12,3,\rho,10^4,10^8)$ & 2 & 5 & 7 & 11 & 16 & 23 & 43 & 88 & 178 & 388   \\ \hline

$N_{3}(12,3,\rho,10^4,10^8)$ & 2 & 5 & 7 & 11 & 16 & 23 & 43 & 88 & 178 &  388  \\ \hline

\end{tabular}

\end{table}

\section*{Acknowledgement}
The author would like to thank Prof. Igor Shparlinski for introducing him pairing-friendly elliptic curves and providing lots of stimulating suggestions. He is grateful to Prof. John Boxall for his valuable comments and helpful discussions, which play a very important role in improving this paper. He also thanks Dr. Nicolas Mascot and Dr. Aurel Page for teaching him how to use the PlaFRIM.
Finally, he thanks the referee for careful reading and useful comments.

\end{document}